\newtheorem{theorem}{Theorem}[section]
\newtheorem{corollary}[theorem]{Corollary}
\newtheorem{proposition}[theorem]{Proposition}
\theoremstyle{definition}
\newtheorem{example}[theorem]{Example}
\theoremstyle{remark}
\newtheorem{remark}[theorem]{Remark}
\title[identities for the generalized Fibonacci polynomials] {Some identities for the generalized Fibonacci polynomials by the $Q(x)$ matrix}
\author[Chung-Chuan Chen]{Chung-Chuan Chen}
\author[Lin-Ling Huang]{Lin-Ling Huang}
\subjclass[2010]{11B37, 11B39}
\keywords{generalized Fibonacci polynomial, $Q(x)$ matrix, Cassini identity, Honsberger formula}
\address{Department of Mathematics Education, National Taichung University of Education, Taiwan}
\email{chungchuan@mail.ntcu.edu.tw}
\email{linda266308@gmail.com}
\date{\today}
\begin{document}

\maketitle

\begin{abstract}
In this note, we obtain some identities for the generalized Fibonacci polynomial by using the $Q(x)$ matrix. These identities including the Cassini identity and Honsberger formula can be applied to some polynomial sequences, such as Fibonacci polynomials, Lucas polynomials, Pell polynomials,
Pell-Lucas polynomials, Fermat polynomials, Fermat-Lucas polynomials, and so on.
\end{abstract}

\baselineskip17pt

\section{Introduction}

A second order polynomial sequence $F_n(x)$ is said to be {\it the Fibonacci polynomial} if
for $n\geq2$ and $x\in\Bbb{R}$,
$$F_{n}(x)=xF_{n-1}(x)+F_{n-2}(x)$$
with $F_{0}(x)=0$ and $F_{1}(x)=1$. The Fibonacci polynomial and other polynomials
attracted a lot of attention over the last several decades (see, for instance, \cite{fp07,fp09,gould,h79,hm85,wz13}).
Recently, the generalized Fibonacci polynomial is introduced and studied intensely by many authors \cite{a94,a95,fhm18,fmm18},
which is a generalization of the Fibonacci polynomial.
Indeed, a polynomial sequence $G_n(x)$ in \cite{fhm18,fmm18} is called {\it the generalized Fibonacci polynomial} if for $n\geq2$,
$$G_{n}(x)=c(x)G_{n-1}(x)+d(x)G_{n-2}(x)$$
with initial conditions $G_{0}(x)$ and $G_{1}(x)$, where $c(x)$ and $d(x)$ are fixed non-zero polynomials in $\Bbb{Q}[x]$.
It should be noted that there is no unique generalization of Fibonacci polynomials.
Following the similar definitions in \cite{fmm18}, in this note, $\mathcal{F}_n(x)$ is said to be {\it the Fibonacci type polynomial} if for $n\geq2$,
$$\mathcal{F}_{0}(x)=0,\ \mathcal{F}_{1}(x)=a\ \mbox{and}\ \mathcal{F}_{n}(x)=c(x)\mathcal{F}_{n-1}(x)+d(x)\mathcal{F}_{n-2}(x)$$
where $a\in \Bbb{R}\setminus\{0\}$. If for $n\geq2$,
$$\mathcal{L}_{0}(x)=q,\ \mathcal{L}_{1}(x)=b(x)\ \mbox{and}\ \mathcal{L}_{n}(x)=c(x)\mathcal{L}_{n-1}(x)+d(x)\mathcal{L}_{n-2}(x),$$
then the polynomial sequence $\mathcal{L}_n(x)$ is called {\it the Lucas type polynomial}, where $q\in \Bbb{R}\setminus\{0\}$ and $b(x)$ is a fixed non-zero polynomial in $\Bbb{Q}[x]$. Naturally, both $\mathcal{F}_n(x)$ and $\mathcal{L}_n(x)$ are the generalized Fibonacci polynomials.
We note that if we assume $\mathcal{F}_{1}(x)=a=1$, then $\mathcal{F}_n(x)$ is the Fibonacci type polynomial given in \cite{fmm18}.
In addition, the definition of $\mathcal{L}_n(x)$ is the same with that of Fl\'orez et al \cite{fmm18} if $|q|=1$ or $2$, and $c(x)=\frac{2}{q}b(x)$.
In other words, our definitions of $\mathcal{F}_n(x)$ and $\mathcal{L}_n(x)$ are generalizations of those in \cite{fmm18}.

Since the investigation of identities for polynomial sequences $\mathcal{F}_n(x)$ and $\mathcal{L}_n(x)$ received
less attention than their numerical sequences, Fl\'orez et al \cite{fmm18} collected and proved many identities for both $\mathcal{F}_n(x)$ and $\mathcal{L}_n(x)$ by applying their Binet formulas mostly, when certain special initial conditions were satisfied for $\mathcal{F}_n(x)$ and $\mathcal{L}_n(x)$. These identities can be applied to Fibonacci polynomials, Lucas polynomials, Pell polynomials,
Pell-Lucas polynomials, Fermat polynomials, Fermat-Lucas polynomials, Chebyshev first kind polynomials, Chebyshev second kind polynomials, Jacobsthal polynomials, Jacobsthal-Lucas polynomials, and Morgan-Voyce polynomials. Indeed, all polynomial sequences in the upper part of Table \ref{table} below are
the Fibonacci type polynomials. On the other hand, those in the lower part of Table \ref{table} are the Lucas type polynomials. Table \ref{table} is the
rearrangement of \cite[Table 1]{fmm18}.

{\tiny
\begin{center}
\begin{table}[h]
\caption{}\label{table}
\begin{tabular}{|l|c|c|l|}
\hline
Polynomial & Initial value & Initial value & Recursive Formula\\
           & $G_0(x)$ & $G_1(x)$ & $G_n(x)=c(x)G_{n-1}(x)+d(x)G_{n-2}(x)$\\
\hline\hline
Fibonacci & $0$ & $1$ & $F_n(x)=xF_{n-1}(x)+F_{n-2}(x)$\\
Pell & $0$ & $1$ & $P_n(x)=2xP_{n-1}(x)+P_{n-2}(x)$\\
Fermat & $0$ & $1$ & $\Phi_n(x)=3x\Phi_{n-1}(x)-2\Phi_{n-2}(x)$\\
Chebyshev second kind & $0$ & $1$ & $U_n(x)=2xU_{n-1}(x)-U_{n-2}(x)$\\
Jacobsthal & $0$ & $1$ & $J_n(x)=J_{n-1}(x)+2xJ_{n-2}(x)$\\
Morgan-Voyce & $0$ & $1$ & $B_n(x)=(x+2)B_{n-1}(x)-B_{n-2}(x)$\\
Vieta & $0$ & $1$ & $V_n(x)=xV_{n-1}(x)-V_{n-2}(x)$\\
\hline
Lucas & $2$ & $x$ & $L_n(x)=xL_{n-1}(x)+L_{n-2}(x)$\\
Pell-Lucas & $2$ & $2x$ & $D_n(x)=2xD_{n-1}(x)+D_{n-2}(x)$\\
Pell-Lucas-prime & $1$ & $x$ & $D'_n(x)=2xD'_{n-1}(x)+D'_{n-2}(x)$\\
Fermat-Lucas & $2$ & $3x$ & $\vartheta_n(x)=3x\vartheta_{n-1}(x)-2\vartheta_{n-2}(x)$\\
Chebyshev first kind & $1$ & $x$ & $T_n(x)=2xT_{n-1}(x)-T_{n-2}(x)$\\
Jacobsthal-Lucas & $1$ & $1$ & $\Lambda_n(x)=\Lambda_{n-1}(x)+2x\Lambda_{n-2}(x)$\\
Morgan-Voyce & $2$ & $x+2$ & $C_n(x)=(x+2)C_{n-1}(x)-C_{n-2}(x)$\\
Vieta-Lucas & $2$ & $x$ & $v_n(x)=xv_{n-1}(x)-v_{n-2}(x)$\\
\hline
\end{tabular}
\end{table}
\end{center}}

In the note, by using the so called the $Q(x)$ matrix of Fibonacci type polynomials rather than the Binet formulas, we will obtain some new identities or recover some well-known ones including the Cassini identity and Honsberger formula for $\mathcal{F}_n(x)$ and $\mathcal{L}_n(x)$. In Section 2, we will present the results for the Fibonacci type polynomial $\mathcal{F}_n(x)$. Relying on Section 2, the identities of the Lucas type polynomial $\mathcal{L}_n(x)$ will be demonstrated in Section 3.

\section{Fibonacci type polynomials}

In this section, we will provide and prove some identities for the Fibonacci type polynomial $\mathcal{F}_n(x)$ by applying the Fibonacci type $Q(x)$ matrix. The original Fibonacci $Q$ matrix was introduced by Charles H. King in his master thesis (cf. \cite{koshy}), and given by
$$Q=
\left(\begin{matrix}
1 & 1 \\
1 & 0
\end{matrix}\right).$$
The Fibonacci $Q$ matrix is connected to the Fibonacci sequence $F_n$, which is defined as below
$$F_{0}=1,\ F_{1}=1\ \ \mbox{and}\ \ F_{n}=F_{n-1}+F_{n-2}\ \ \mbox{for}\ \ n\geq2.$$
Indeed, it is noted in \cite{gould} that
$$
Q^n=
\left(\begin{matrix}
1 & 1 \\
1 & 0
\end{matrix}\right)^n
=
\left(\begin{matrix}
F_{n} & F_{n-1} \\
F_{n-1} & F_{n-2}
\end{matrix}\right).
$$
Using this relation above, some familiar identities can be obtained. For instance,
$$
\det\left(\begin{matrix}
F_{n+1} & F_n \\
F_n & F_{n-1}
\end{matrix}\right)
=
\left(\det\left(\begin{matrix}
1 & 1 \\
1 & 0
\end{matrix}\right)\right)^n$$
implies the Cassini identity
$$F_{n+1}F_{n-1}-F_n^2=(-1)^n.$$
Also, using this equality $Q^{n+m}=Q^nQ^m$, one can deduce the Honsberger formula.

In the following, we will apply some similar idea of $Q$ matrix from the numerical cases \cite{lin12} to the Fibonacci type polynomials.
For $n\geq2$ and $x\in\Bbb{R}$, the Fibonacci type polynomial $\mathcal{F}_n(x)$ is defined by
\begin{equation}\label{f}
\mathcal{F}_{0}(x)=0,\ \mathcal{F}_{1}(x)=a\ \mbox{and}\ \mathcal{F}_{n}(x)=c(x)\mathcal{F}_{n-1}(x)+d(x)\mathcal{F}_{n-2}(x)
\end{equation}
where $a\in\Bbb{R}\setminus\{0\}$. Then
$$\left(\begin{matrix}
\mathcal{F}_{n+2}(x) \\
\mathcal{F}_{n+1}(x)
\end{matrix}\right)
=
\left(\begin{matrix}
c(x) & d(x) \\
1 & 0 \\
\end{matrix}\right)
\left(\begin{matrix}
\mathcal{F}_{n+1}(x) \\
\mathcal{F}_{n}(x)
\end{matrix}\right)
.$$
Here we define {\it the Fibonacci type $Q(x)$ matrix} by
$$Q(x)=
\left(\begin{matrix}
c(x) & d(x) \\
1 & 0 \\
\end{matrix}\right).
$$
We note that if $\mathcal{F}_{n}(x)=P_n(x)$ is the Pell polynomial as defined in Table \ref{table}, then
$$Q(x)=
\left(\begin{matrix}
2x & 1 \\
1 & 0 \\
\end{matrix}\right)
$$
which appeared in \cite{hm85}.
In addition, we observe that
$$\left(\begin{matrix}
\mathcal{F}_{n+2}(x) \\
\mathcal{F}_{n+1}(x)
\end{matrix}\right)
=
\left(\begin{matrix}
c(x) & d(x) \\
1 & 0
\end{matrix}\right)^{n}
\left(\begin{matrix}
\mathcal{F}_{2}(x) \\
\mathcal{F}_{1}(x)
\end{matrix}\right)
=
\left(\begin{matrix}
c(x) & d(x) \\
1 & 0
\end{matrix}\right)^{n}
\left(\begin{matrix}
a c(x) \\
a
\end{matrix}\right).$$
On the other hand,
$$\left(\begin{matrix}
\mathcal{F}_{n+2}(x) \\
\mathcal{F}_{n+1}(x)
\end{matrix}\right)
=
\left(\begin{matrix}
c(x)\mathcal{F}_{n+1}(x)+d(x)\mathcal{F}_n(x) \\
c(x)\mathcal{F}_{n}(x)+d(x)\mathcal{F}_{n-1}(x)
\end{matrix}\right)
=
\left(\begin{matrix}
\frac{1}{a}\mathcal{F}_{n+1}(x) & \frac{d(x)}{a}\mathcal{F}_n(x) \\
\frac{1}{a}\mathcal{F}_n(x) & \frac{d(x)}{a}\mathcal{F}_{n-1}(x)
\end{matrix}\right)
\left(\begin{matrix}
a c(x) \\
a
\end{matrix}\right).$$
Hence we have the following result.

\begin{theorem}\label{f-type}
Let $\mathcal{F}_n(x)$ be the Fibonacci type polynomial as defined in Eq. {\rm (\ref{f})}.
Then for each $n\in\Bbb{N}$,
$$
\left(\begin{matrix}
\frac{1}{a}\mathcal{F}_{n+1}(x) & \frac{d(x)}{a}\mathcal{F}_n(x) \\
\frac{1}{a}\mathcal{F}_n(x) & \frac{d(x)}{a}\mathcal{F}_{n-1}(x)
\end{matrix}\right)
=
\left(\begin{matrix}
c(x) & d(x) \\
1 & 0
\end{matrix}\right)^n
=Q^n(x).$$
\end{theorem}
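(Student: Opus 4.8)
The plan is to establish the identity by induction on $n$, which is the standard route for $Q$-matrix formulas of this type. I would first emphasize that the two displayed expressions for $\bigl(\mathcal{F}_{n+2}(x),\mathcal{F}_{n+1}(x)\bigr)^{T}$ appearing just before the statement do not by themselves prove the theorem: they only show that $Q^n(x)$ and the matrix on the left-hand side agree when applied to the single vector $\bigl(ac(x),a\bigr)^{T}$. Agreement on one vector falls short of matrix equality, so an inductive argument (or, equivalently, verification on a second linearly independent vector) is what actually closes the gap.

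For the base case $n=1$, I would evaluate the left-hand matrix directly from the initial data. By Eq. (\ref{f}) we have $\mathcal{F}_0(x)=0$, $\mathcal{F}_1(x)=a$, and $\mathcal{F}_2(x)=c(x)\mathcal{F}_1(x)+d(x)\mathcal{F}_0(x)=ac(x)$. Substituting these gives
$$\left(\begin{matrix} \frac{1}{a}\mathcal{F}_2(x) & \frac{d(x)}{a}\mathcal{F}_1(x) \\ \frac{1}{a}\mathcal{F}_1(x) & \frac{d(x)}{a}\mathcal{F}_0(x) \end{matrix}\right) = \left(\begin{matrix} c(x) & d(x) \\ 1 & 0 \end{matrix}\right) = Q(x),$$
which is precisely $Q^1(x)$.

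For the inductive step, I would assume the formula for some $n\geq1$ and write $Q^{n+1}(x)=Q(x)\,Q^n(x)$. Replacing $Q^n(x)$ by the inductive hypothesis and carrying out the $2\times2$ product, each resulting entry is, up to the factor $\frac1a$ or $\frac{d(x)}{a}$, a combination of the form $c(x)\mathcal{F}_k(x)+d(x)\mathcal{F}_{k-1}(x)$, which collapses to $\mathcal{F}_{k+1}(x)$ by the recurrence in Eq. (\ref{f}). This converts the product into the left-hand matrix with $n$ advanced to $n+1$, completing the induction. The only point needing care — and the one I would flag as the main obstacle — is the bookkeeping: one must verify that the index shift induced by the recurrence lines up in all four entries simultaneously, and in particular that the off-diagonal factor $d(x)$ is tracked consistently, so that the $(1,2)$ and $(2,2)$ entries emerge as $\frac{d(x)}{a}\mathcal{F}_{n+1}(x)$ and $\frac{d(x)}{a}\mathcal{F}_n(x)$ rather than being corrupted by a spurious extra factor of $d(x)$.
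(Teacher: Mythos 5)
Your proof is correct and follows essentially the same route as the paper: induction on $n$ with the base case $n=1$ checked from the initial conditions, and the inductive step carried out by multiplying the hypothesis matrix by $Q(x)$ and collapsing each entry via the recurrence $\mathcal{F}_{k+1}(x)=c(x)\mathcal{F}_k(x)+d(x)\mathcal{F}_{k-1}(x)$. Your additional observation that the displayed identities preceding the theorem only verify agreement on a single vector, and hence do not by themselves establish matrix equality, is a valid point that the paper leaves implicit.
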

\begin{proof}
Let $n=1$. Then
$$
\left(\begin{matrix}
\frac{1}{a}\mathcal{F}_{2}(x) & \frac{d(x)}{a}\mathcal{F}_1(x) \\
\frac{1}{a}\mathcal{F}_1(x) & \frac{d(x)}{a}\mathcal{F}_{0}(x)
\end{matrix}\right)
=
\left(\begin{matrix}
c(x) & d(x) \\
1 & 0
\end{matrix}\right).$$
Assume the equality holds for $n=k$.
Then we have
$$
\left(\begin{matrix}
\frac{1}{a}\mathcal{F}_{k+1}(x) & \frac{d(x)}{a}\mathcal{F}_k(x) \\
\frac{1}{a}\mathcal{F}_k(x) & \frac{d(x)}{a}\mathcal{F}_{k-1}(x)
\end{matrix}\right)
=
\left(\begin{matrix}
c(x) & d(x) \\
1 & 0
\end{matrix}\right)^k.$$
If $n=k+1$, then
$$
\left(\begin{matrix}
\frac{1}{a}\mathcal{F}_{k+2}(x) & \frac{d(x)}{a}\mathcal{F}_{k+1}(x) \\
\frac{1}{a}\mathcal{F}_{k+1}(x) & \frac{d(x)}{a}\mathcal{F}_{k}(x)
\end{matrix}\right)
=
\left(\begin{matrix}
c(x) & d(x) \\
1 & 0
\end{matrix}\right)
\left(\begin{matrix}
\frac{1}{a}\mathcal{F}_{k+1}(x) & \frac{d(x)}{a}\mathcal{F}_k(x) \\
\frac{1}{a}\mathcal{F}_k(x) & \frac{d(x)}{a}\mathcal{F}_{k-1}(x)
\end{matrix}\right)
=
\left(\begin{matrix}
c(x) & d(x) \\
1 & 0
\end{matrix}\right)^{k+1}.$$
By induction, the result follows.
\end{proof}

The Cassini identity of the Fibonacci type polynomial $\mathcal{F}_n(x)$ can be obtained below by Theorem \ref{f-type}.

\begin{corollary}\label{f-Cassini}
Let $\mathcal{F}_n(x)$ be the Fibonacci type polynomial.
Then for each $n\in\Bbb{N}$,
$$\mathcal{F}_n^2(x)-\mathcal{F}_{n+1}(x)\mathcal{F}_{n-1}(x)=a^2(-d(x))^{n-1}.$$
\end{corollary}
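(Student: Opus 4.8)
The plan is to obtain the identity by taking determinants on both sides of the matrix equation furnished by Theorem~\ref{f-type}, and then rearranging. The key structural fact I would use is that the determinant is multiplicative, so that $\det\bigl(Q^n(x)\bigr)=\bigl(\det Q(x)\bigr)^n$; this converts the $n$-th power of a matrix into the $n$-th power of a single scalar polynomial, which is exactly the shape of the right-hand side of a Cassini-type identity.

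First I would record that
$$\det Q(x)=\det\left(\begin{matrix} c(x) & d(x) \\ 1 & 0 \end{matrix}\right)=-d(x),$$
so $\det\bigl(Q^n(x)\bigr)=(-d(x))^n$. Next I would expand the determinant of the left-hand matrix in Theorem~\ref{f-type}, pulling out the common factor $1/a$ from the first column and $d(x)/a$ from the second, to get
$$\det\left(\begin{matrix} \frac{1}{a}\mathcal{F}_{n+1}(x) & \frac{d(x)}{a}\mathcal{F}_n(x) \\ \frac{1}{a}\mathcal{F}_n(x) & \frac{d(x)}{a}\mathcal{F}_{n-1}(x) \end{matrix}\right)=\frac{d(x)}{a^2}\bigl(\mathcal{F}_{n+1}(x)\mathcal{F}_{n-1}(x)-\mathcal{F}_n^2(x)\bigr).$$
Equating the two determinants then yields $\frac{d(x)}{a^2}\bigl(\mathcal{F}_{n+1}(x)\mathcal{F}_{n-1}(x)-\mathcal{F}_n^2(x)\bigr)=(-d(x))^n$.

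Finally I would solve for the desired expression: cancel one factor of $d(x)$, multiply through by $a^2$, and negate, arriving at $\mathcal{F}_n^2(x)-\mathcal{F}_{n+1}(x)\mathcal{F}_{n-1}(x)=a^2(-1)^{n-1}d(x)^{n-1}=a^2(-d(x))^{n-1}$. There is no genuine obstacle here, since the computation is routine $2\times 2$ linear algebra, but two small points deserve care. The cancellation of $d(x)$ is legitimate precisely because the definition stipulates that $d(x)$ is a fixed \emph{non-zero} polynomial, so we are dividing in the integral domain $\Bbb{Q}[x]$. And one must watch the sign bookkeeping: since $(-d(x))^n/d(x)=(-1)^n d(x)^{n-1}$ while the identity is stated with the ordering $\mathcal{F}_n^2-\mathcal{F}_{n+1}\mathcal{F}_{n-1}$, the extra minus sign from the reversal turns $(-1)^n$ into $(-1)^{n-1}$ and hence produces the claimed factor $(-d(x))^{n-1}$.
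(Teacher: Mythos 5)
Your proof is correct and takes essentially the same approach as the paper: apply the determinant to both sides of the matrix identity in Theorem~\ref{f-type}, use multiplicativity to get $\bigl(\det Q(x)\bigr)^n=(-d(x))^n$, and rearrange. The only difference is that you spell out the determinant expansion, the cancellation of the nonzero polynomial $d(x)$ in $\Bbb{Q}[x]$, and the sign bookkeeping, all of which the paper leaves implicit after its ``Hence.''
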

\begin{proof}
By Theorem \ref{f-type}, we have
$$
\det\left(\begin{matrix}
\frac{1}{a}\mathcal{F}_{n+1}(x) & \frac{d(x)}{a}\mathcal{F}_n(x) \\
\frac{1}{a}\mathcal{F}_n(x) & \frac{d(x)}{a}\mathcal{F}_{n-1}(x)
\end{matrix}\right)
=
\left(\det\left(\begin{matrix}
c(x) & d(x) \\
1 & 0
\end{matrix}\right)\right)^n.$$
Hence
$$\mathcal{F}_n^2(x)-\mathcal{F}_{n+1}(x)F_{n-1}(x)=a^2(-d(x))^{n-1}.$$
\end{proof}

\begin{example}
Let $a=1, c(x)=x, d(x)=1$ in Eq. (\ref{f}). Then $\mathcal{F}_n(x)$ is the classical Fibonacci polynomial $F_n(x)$. By Corollary \ref{f-Cassini}, we recover the Cassini identity in \cite{fp09},
$$F_{n+1}(x)F_{n-1}(x)-F_n^2(x)=(-1)^{n}.$$
\end{example}

\begin{example}
Let $\mathcal{F}_n(x)$ be the Pell polynomial $P_n(x)$ as defined in Table \ref{table}. By Corollary \ref{f-Cassini},
$$P_{n+1}(x)P_{n-1}(x)-P_n^2(x)=(-1)^{n}$$
which is the identity (2.5) in \cite{hm85}.
\end{example}

\begin{example}
Let $a=1, c(x)=1, d(x)=2x$ in Eq. (\ref{f}). Then $\mathcal{F}_n(x)=J_n(x)$ is the Jacobsthal polynomial as defined in Table \ref{table}. By Corollary \ref{f-Cassini}, one can obtain the Cassini identity for the Jacobsthal polynomial below
$$J_n^2(x)-J_{n+1}(x)J_{n-1}(x)=(-2x)^{n-1}.$$
\end{example}

By Corollary \ref{f-Cassini}, we have the result below.

\begin{corollary}
Let $\mathcal{F}_n(x)$ be the Fibonacci type polynomial.
Then for each $n\in\Bbb{N}$,
$$\mathcal{F}^2_n(x)-c(x)\mathcal{F}_{n}(x)\mathcal{F}_{n-1}(x)-d(x)\mathcal{F}^2_{n-1}(x)-=a^2(-d(x))^{n-1}.$$
\end{corollary}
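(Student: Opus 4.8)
The plan is to obtain this identity directly from Corollary~\ref{f-Cassini}, rather than from a fresh determinant computation, by eliminating $\mathcal{F}_{n+1}(x)$ through the defining recurrence in Eq.~(\ref{f}). Since Corollary~\ref{f-Cassini} already delivers the desired right-hand side $a^2(-d(x))^{n-1}$, the only work remaining is to replace $\mathcal{F}_{n+1}(x)$ by a combination of $\mathcal{F}_n(x)$ and $\mathcal{F}_{n-1}(x)$, so that all three terms of the left-hand side involve only the indices $n$ and $n-1$.

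Concretely, first I would recall the Cassini identity of Corollary~\ref{f-Cassini}, namely
$$\mathcal{F}_n^2(x)-\mathcal{F}_{n+1}(x)\mathcal{F}_{n-1}(x)=a^2(-d(x))^{n-1}.$$
Next, since the recurrence gives $\mathcal{F}_{n+1}(x)=c(x)\mathcal{F}_n(x)+d(x)\mathcal{F}_{n-1}(x)$, valid for every $n\geq1$, I would expand the mixed product as
$$\mathcal{F}_{n+1}(x)\mathcal{F}_{n-1}(x)=c(x)\mathcal{F}_n(x)\mathcal{F}_{n-1}(x)+d(x)\mathcal{F}^2_{n-1}(x).$$
Substituting this expansion into the Cassini identity and regrouping then produces
$$\mathcal{F}^2_n(x)-c(x)\mathcal{F}_{n}(x)\mathcal{F}_{n-1}(x)-d(x)\mathcal{F}^2_{n-1}(x)=a^2(-d(x))^{n-1},$$
which is precisely the claimed identity.

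I do not expect any genuine obstacle here: the derivation is a single algebraic substitution once Corollary~\ref{f-Cassini} is available. The only point deserving a moment of care is the index bookkeeping, namely checking that the recurrence can legitimately be applied to rewrite $\mathcal{F}_{n+1}(x)$, which holds for all $n\in\Bbb{N}$, and then confirming that the two new terms carry the correct signs after regrouping. (The intended statement is the one displayed above; the trailing minus sign in the original line appears to be a typographical slip.)
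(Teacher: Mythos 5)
Your proposal is correct and follows exactly the paper's own argument: substitute the recurrence $\mathcal{F}_{n+1}(x)=c(x)\mathcal{F}_n(x)+d(x)\mathcal{F}_{n-1}(x)$ into the Cassini identity of Corollary~\ref{f-Cassini} and regroup. Your observation that the trailing minus sign in the statement is a typographical slip is also accurate.
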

\begin{proof}
By
$$\mathcal{F}_n^2(x)-\mathcal{F}_{n+1}(x)\mathcal{F}_{n-1}(x)=a^2(-d(x))^{n-1}.$$
and
$$\mathcal{F}_{n+1}(x)=c(x)\mathcal{F}_{n}(x)+d(x)\mathcal{F}_{n-1}(x),$$
we have
\begin{eqnarray*}
a^2(-d(x))^{n-1}&=&\mathcal{F}_n^2(x)-\left(c(x)\mathcal{F}_{n}(x)+d(x)\mathcal{F}_{n-1}(x)\right)\mathcal{F}_{n-1}(x)\\
&=&\mathcal{F}^2_n(x)-c(x)\mathcal{F}_{n}(x)\mathcal{F}_{n-1}(x)-d(x)\mathcal{F}^2_{n-1}(x).
\end{eqnarray*}
\end{proof}

By applying $Q^{n+m}(x)=Q^n(x)Q^m(x)$, we give the Honsberger's formula for the the Fibonacci type polynomials below.

\begin{corollary}\label{f-Honsberger}
Let $\mathcal{F}_n(x)$ be the Fibonacci type polynomial.
Then for each $n,m\in\Bbb{N}$,
$$a\mathcal{F}_{n+m}(x)=\mathcal{F}_{n}(x)\mathcal{F}_{m+1}(x)+d(x)\mathcal{F}_{n-1}(x)\mathcal{F}_{m}(x).$$
\end{corollary}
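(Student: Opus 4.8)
The plan is to exploit the multiplicativity of matrix powers, namely $Q^{n+m}(x)=Q^n(x)Q^m(x)$, together with the explicit closed form for $Q^n(x)$ furnished by Theorem \ref{f-type}. First I would record the three matrices $Q^n(x)$, $Q^m(x)$, and $Q^{n+m}(x)$ using that theorem, so that every entry is written in terms of the polynomials $\mathcal{F}_k(x)$. In particular, the left-hand side $Q^{n+m}(x)$ has lower-left entry $\frac{1}{a}\mathcal{F}_{n+m}(x)$, which is precisely the quantity that should appear in the Honsberger formula.

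Next I would carry out the product $Q^n(x)Q^m(x)$ and compare a single entry. The lower-left $(2,1)$ entry is the most convenient choice. Its value is obtained by pairing the second row of $Q^n(x)$, namely $\bigl(\frac{1}{a}\mathcal{F}_n(x),\ \frac{d(x)}{a}\mathcal{F}_{n-1}(x)\bigr)$, with the first column of $Q^m(x)$, namely $\bigl(\frac{1}{a}\mathcal{F}_{m+1}(x),\ \frac{1}{a}\mathcal{F}_m(x)\bigr)^{\!\top}$. This produces $\frac{1}{a^2}\mathcal{F}_n(x)\mathcal{F}_{m+1}(x)+\frac{d(x)}{a^2}\mathcal{F}_{n-1}(x)\mathcal{F}_m(x)$ for the $(2,1)$ entry of $Q^n(x)Q^m(x)$.

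Finally, equating the two expressions for the $(2,1)$ entry and clearing the common factor by multiplying through by $a^2$ yields $a\mathcal{F}_{n+m}(x)=\mathcal{F}_n(x)\mathcal{F}_{m+1}(x)+d(x)\mathcal{F}_{n-1}(x)\mathcal{F}_m(x)$, which is exactly the claimed identity. Because the whole argument reduces to one matrix multiplication followed by reading off a single entry, there is no genuine obstacle here; the only point demanding a little care is the bookkeeping of indices, that is, ensuring the chosen row of $Q^n(x)$ and column of $Q^m(x)$ are paired so that the shifts $m+1$ and $m$ attach to the correct factors. One could equally compare any of the remaining three entries: the $(1,1)$ entry, for instance, would reproduce the same formula with the index $n+m$ replaced by $n+m+1$, so selecting the $(2,1)$ entry is simply the cleanest route to the stated form.
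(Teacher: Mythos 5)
Your proposal is correct and follows essentially the same route as the paper's own proof: both invoke $Q^{n+m}(x)=Q^n(x)Q^m(x)$, substitute the entrywise formula from Theorem \ref{f-type}, and read off the $(2,1)$ entry, with the factor $a^2$ cleared at the end. Your closing observation about the $(1,1)$ entry yielding the shifted identity is also accurate, so nothing needs to be changed.
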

\begin{proof}
By
$$\left(\begin{matrix}
c(x) & d(x) \\
1 & 0
\end{matrix}\right)^{n+m}
=\left(\begin{matrix}
c(x) & d(x) \\
1 & 0
\end{matrix}\right)^{n}
\left(\begin{matrix}
c(x) & d(x) \\
1 & 0
\end{matrix}\right)^{m},$$
we have
$$
\left(\begin{matrix}
\frac{1}{a}\mathcal{F}_{n+m+1}(x) & \frac{d(x)}{a}\mathcal{F}_{n+m}(x) \\
\frac{1}{a}\mathcal{F}_{n+m}(x) & \frac{d(x)}{a}\mathcal{F}_{n+m-1}(x)
\end{matrix}\right)
=
\left(\begin{matrix}
\frac{1}{a}\mathcal{F}_{n+1}(x) & \frac{d(x)}{a}\mathcal{F}_n(x) \\
\frac{1}{a}\mathcal{F}_n(x) & \frac{d(x)}{a}\mathcal{F}_{n-1}(x)
\end{matrix}\right)
\left(\begin{matrix}
\frac{1}{a}\mathcal{F}_{m+1}(x) & \frac{d(x)}{a}\mathcal{F}_m(x) \\
\frac{1}{a}\mathcal{F}_m(x) & \frac{d(x)}{a}\mathcal{F}_{m-1}(x)
\end{matrix}\right)
.$$
Hence considering the $(2,1)$ entry of the first matrix in the equality above,
$$a\mathcal{F}_{n+m}(x)=\mathcal{F}_{n}(x)\mathcal{F}_{m+1}(x)+d(x)\mathcal{F}_{n-1}(x)\mathcal{F}_{m}(x).$$
\end{proof}

\begin{remark}
\item (i) Let $a=1$ in Corollary \ref{f-Honsberger}. Then Corollary \ref{f-Honsberger} is the same with the first result of \cite[Proposition 1]{fmm18}, and a generalization of \cite[Proposition 5]{fp09}.
\item (ii) If $m=n-1$ in the above corollary, then for each $n\in\Bbb{N}$,
$$a\mathcal{F}_{2n-1}(x)=\mathcal{F}^2_{n}(x)+d(x)\mathcal{F}^2_{n-1}(x)$$
which generalizes the numerical case of Fibonacci sequences.
\end{remark}

\begin{example}
Let $a=1, c(x)=x, d(x)=1$ in Eq. (\ref{f}). Then $\mathcal{F}_n(x)=F_n(x)$ is the Fibonacci polynomial as defined in Table \ref{table}. By Corollary \ref{f-Honsberger}, we recover the Honsberger formula in \cite[Proposition 5]{fp09},
$${F}_{n+m}(x)={F}_{n}(x){F}_{m+1}(x)+{F}_{n-1}(x){F}_{m}(x).$$
\end{example}

\begin{example}
Let $a=1, c(x)=2x, d(x)=1$ in Eq. (\ref{f}). Then $\mathcal{F}_n(x)$ is the Pell polynomial $P_n(x)$. By Corollary \ref{f-Honsberger}, we have
$${P}_{n+m}(x)={P}_{n}(x){P}_{m+1}(x)+{P}_{n-1}(x){P}_{m}(x)$$
which is the equality (3.14) in \cite{hm85}.
\end{example}

Using $Q^{n-m}(x)=Q^n(x)Q^{-m}(x)$ for $n\geq m$, we next will prove the d'Ocagne identity for $\mathcal{F}_n(x)$. Here we need to assume $d(x)\neq0$ for each $x\in \Bbb{R}$ so that $Q(x)$ is invertible. Moreover, note that
$$Q^{-m}(x)
=
\left(\begin{matrix}
\frac{1}{a}\mathcal{F}_{m+1}(x) & \frac{d(x)}{a}\mathcal{F}_m(x) \\
\frac{1}{a}\mathcal{F}_m(x) & \frac{d(x)}{a}\mathcal{F}_{m-1}(x)
\end{matrix}\right)^{-1}
=\frac{1}{(-d(x))^m}
\left(\begin{matrix}
\frac{d(x)}{a}\mathcal{F}_{m-1}(x) & -\frac{d(x)}{a}\mathcal{F}_m(x) \\
-\frac{1}{a}\mathcal{F}_m(x) & \frac{1}{a}\mathcal{F}_{m+1}(x)
\end{matrix}\right)
$$
by Theorem \ref{f-type} and Corollary \ref{f-Cassini}.

\begin{corollary}\label{f-d'Ocagne}
Let $\mathcal{F}_n(x)$ be the Fibonacci type polynomial, and let $d(x)\neq0$ for each $x\in \Bbb{R}$.
Then for each $n,m\in\Bbb{N}$ with $n\geq m$,
$$a(-d(x))^m\mathcal{F}_{n-m}(x)=\mathcal{F}_{n}(x)\mathcal{F}_{m+1}(x)-\mathcal{F}_{n+1}(x)\mathcal{F}_{m}(x).$$
\end{corollary}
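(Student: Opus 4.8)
The plan is to reproduce the matrix-product strategy used for the Honsberger formula in Corollary~\ref{f-Honsberger}, but now exploiting the factorization $Q^{n-m}(x)=Q^n(x)Q^{-m}(x)$, valid for $n\geq m$ once $Q(x)$ is known to be invertible. Since $d(x)\neq0$ for every $x\in\Bbb{R}$, Corollary~\ref{f-Cassini} gives $\det Q^m(x)=(-d(x))^m\neq0$, so $Q^{-m}(x)$ exists and equals precisely the matrix displayed just before the statement. No new ingredient beyond Theorem~\ref{f-type} and Corollary~\ref{f-Cassini} is needed.

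First I would use Theorem~\ref{f-type} to write both $Q^{n-m}(x)$ and $Q^n(x)$ explicitly in terms of the Fibonacci type polynomials, and substitute the closed form of $Q^{-m}(x)$. This turns the factorization into an equality between two $2\times2$ matrices whose entries are explicit quadratic expressions in the $\mathcal{F}_k(x)$, with an overall scalar $1/(-d(x))^m$ on the product side.

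Next I would compare the $(1,2)$ entries. The left-hand matrix $Q^{n-m}(x)$ contributes $\frac{d(x)}{a}\mathcal{F}_{n-m}(x)$, while the product contributes $\frac{1}{(-d(x))^m}\cdot\frac{d(x)}{a^2}\bigl(\mathcal{F}_n(x)\mathcal{F}_{m+1}(x)-\mathcal{F}_{n+1}(x)\mathcal{F}_m(x)\bigr)$. Multiplying both sides by $\frac{a^2(-d(x))^m}{d(x)}$, which is legitimate because $d(x)\neq0$, cancels the common factor $\frac{d(x)}{a}$ and yields exactly
$$a(-d(x))^m\mathcal{F}_{n-m}(x)=\mathcal{F}_{n}(x)\mathcal{F}_{m+1}(x)-\mathcal{F}_{n+1}(x)\mathcal{F}_{m}(x),$$
the asserted d'Ocagne identity.

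Because the expression for $Q^{-m}(x)$ is already supplied, there is no real conceptual obstacle here; the only care required is in the bookkeeping of the matrix product and the correct cancellation of the scalar $(-d(x))^{-m}$ against the factors of $a$ and $d(x)$. I would also verify that selecting the $(1,2)$ entry is what reproduces the stated form: the $(2,1)$ entry instead yields the index-shifted combination $\mathcal{F}_n(x)\mathcal{F}_{m-1}(x)-\mathcal{F}_{n-1}(x)\mathcal{F}_m(x)$ carrying an extra factor $d(x)$, so choosing the $(1,2)$ entry is precisely what makes the indices line up as in the corollary.
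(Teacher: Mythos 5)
Your proposal is correct and follows essentially the same route as the paper: both factor $Q^{n-m}(x)=Q^n(x)Q^{-m}(x)$, substitute the explicit form of $Q^{-m}(x)$ obtained from Theorem~\ref{f-type} and Corollary~\ref{f-Cassini}, and read off the $(1,2)$ entry, after which clearing the scalar $\frac{d(x)}{a^2(-d(x))^m}$ gives the identity. Your side remark that the $(2,1)$ entry instead produces the index-shifted combination with an extra factor of $d(x)$ is also accurate.
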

\begin{proof}
By
$Q^{n-m}(x)=Q^n(x)Q^{-m}(x)$,
we have
\begin{eqnarray*}
&&\left(\begin{matrix}
\frac{1}{a}\mathcal{F}_{n-m+1}(x) & \frac{d(x)}{a}\mathcal{F}_{n-m}(x) \\
\frac{1}{a}\mathcal{F}_{n-m}(x) & \frac{d(x)}{a}\mathcal{F}_{n-m-1}(x)
\end{matrix}\right)\\
&=&
\left(\begin{matrix}
\frac{1}{a}\mathcal{F}_{n+1}(x) & \frac{d(x)}{a}\mathcal{F}_n(x) \\
\frac{1}{a}\mathcal{F}_n(x) & \frac{d(x)}{a}\mathcal{F}_{n-1}(x)
\end{matrix}\right)
\frac{1}{(-d(x))^m}
\left(\begin{matrix}
\frac{d(x)}{a}\mathcal{F}_{m-1}(x) & -\frac{d(x)}{a}\mathcal{F}_m(x) \\
-\frac{1}{a}\mathcal{F}_m(x) & \frac{1}{a}\mathcal{F}_{m+1}(x)
\end{matrix}\right)
.\end{eqnarray*}
Hence considering the $(1,2)$ entry of the first matrix in the equality above,
$$a(-d(x))^m\mathcal{F}_{n-m}(x)=\mathcal{F}_{n}(x)\mathcal{F}_{m+1}(x)-\mathcal{F}_{n+1}(x)\mathcal{F}_{m}(x).$$
\end{proof}

\begin{example}
Let $\mathcal{F}_n(x)$ be the Fibonacci polynomial $F_n(x)$ as defined in Table \ref{table}. By Corollary \ref{f-d'Ocagne},
$$(-1)^m{F}_{n-m}(x)={F}_{n}(x){F}_{m+1}(x)-{F}_{n+1}(x){F}_{m}(x)$$
which is the d'Ocagne identity in \cite[Corollary 8]{fp09}, and the identity (47) of \cite[Proposition 3]{fmm18}.
\end{example}

We note that $Q(x)=
\left(\begin{matrix}
c(x) & d(x) \\
1 & 0
\end{matrix}\right)$
satisfies
$Q^2(x)=c(x)Q(x)+d(x)I$
where
$I=
\left(\begin{matrix}
1 & 0 \\
0 & 1
\end{matrix}\right)$.
Using this equality, one can obtain the following expression of $\mathcal{F}_n(x)$.

\begin{theorem}\label{f-series}
Let $\mathcal{F}_n(x)$ be the Fibonacci type polynomial.
Then for each $n,p\in\Bbb{N}$,
$$\mathcal{F}_{2n+p}(x)
=\sum_{j=0}^{n}
\left(\begin{matrix}
n \\
j
\end{matrix}\right)
c^j(x)d^{n-j}(x)\mathcal{F}_{j+p}(x).$$
\end{theorem}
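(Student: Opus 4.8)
The plan is to exploit the quadratic relation $Q^2(x)=c(x)Q(x)+d(x)I$ noted immediately above the statement, together with the entrywise description of the powers of $Q(x)$ supplied by Theorem \ref{f-type}. The key observation is that the two matrices $c(x)Q(x)$ and $d(x)I$ commute, since $d(x)I$ is a scalar multiple of the identity and hence commutes with every matrix; this is precisely what licenses the ordinary binomial theorem for powers of their sum.

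First I would raise the quadratic relation to the $n$-th power. Starting from $Q^2(x)=c(x)Q(x)+d(x)I$ and expanding by the binomial theorem (permissible by the commutativity just noted) gives
$$Q^{2n}(x)=\bigl(c(x)Q(x)+d(x)I\bigr)^n=\sum_{j=0}^{n}\binom{n}{j}c^j(x)d^{n-j}(x)Q^j(x).$$
Next I would multiply both sides on the right by $Q^p(x)$ and use $Q^j(x)Q^p(x)=Q^{j+p}(x)$ to obtain
$$Q^{2n+p}(x)=\sum_{j=0}^{n}\binom{n}{j}c^j(x)d^{n-j}(x)Q^{j+p}(x).$$

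Finally I would read off a single matrix entry from both sides. By Theorem \ref{f-type}, the $(2,1)$ entry of $Q^k(x)$ equals $\tfrac{1}{a}\mathcal{F}_k(x)$ for every $k$. Comparing the $(2,1)$ entries of the two sides of the displayed identity therefore yields
$$\frac{1}{a}\mathcal{F}_{2n+p}(x)=\sum_{j=0}^{n}\binom{n}{j}c^j(x)d^{n-j}(x)\frac{1}{a}\mathcal{F}_{j+p}(x),$$
and multiplying through by $a$ gives the claimed formula. The only point requiring any care is the justification of the binomial expansion, namely the commutativity of $c(x)Q(x)$ and $d(x)I$; once that is granted the argument is entirely mechanical, so I do not expect a genuine obstacle here.
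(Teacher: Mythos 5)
Your proof is correct, and it shares the paper's core idea: write $Q^{2n+p}(x)=Q^{p}(x)\bigl(Q^{2}(x)\bigr)^{n}$, expand $\bigl(c(x)Q(x)+d(x)I\bigr)^{n}$ by the binomial theorem (valid, as you note, because $d(x)I$ commutes with everything), and extract an entry via Theorem \ref{f-type}. The one genuine difference lies in the final step. The paper keeps the right-hand side as a product of two explicit matrices, multiplies it out entrywise, and is then forced to invoke the Honsberger formula (Corollary \ref{f-Honsberger}) to recombine the resulting bilinear expression $\mathcal{F}_{p}(x)\mathcal{F}_{j+1}(x)+d(x)\mathcal{F}_{p-1}(x)\mathcal{F}_{j}(x)$ into $a\mathcal{F}_{j+p}(x)$. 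You instead absorb the power first, using $Q^{j}(x)Q^{p}(x)=Q^{j+p}(x)$, so that the identity becomes a purely linear relation among the matrices $Q^{j+p}(x)$; comparing $(2,1)$ entries, which equal $\tfrac{1}{a}\mathcal{F}_{k}(x)$ by Theorem \ref{f-type}, then finishes the argument at once (your choice of the $(2,1)$ entry over the $(1,2)$ entry is also the cleaner one, since it avoids the extraneous factor $d(x)$). Your route is thus slightly more economical and makes the theorem independent of Corollary \ref{f-Honsberger}, whereas the paper's version exhibits Honsberger's formula as the mechanism that re-sums the matrix product; both arguments are complete, but yours has fewer moving parts.
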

\begin{proof}
Consider
\begin{eqnarray*}
&&\left(\begin{matrix}
\frac{1}{a}\mathcal{F}_{2n+p+1}(x) & \frac{d(x)}{a}\mathcal{F}_{2n+p}(x) \\
\frac{1}{a}\mathcal{F}_{2n+p}(x) & \frac{d(x)}{a}\mathcal{F}_{2n+p-1}(x)
\end{matrix}\right)\\
&=&Q^{2n+p}(x)\\
&=&Q^{p}(x)\left(Q^{2}(x)\right)^n\\
&=&Q^{p}(x)\left(c(x)Q(x)+d(x)I\right)^n\\
&=&Q^{p}(x)\left(\sum_{j=0}^n
\left(\begin{matrix}
n \\
j
\end{matrix}\right)
c^j(x)d^{n-j}(x)Q^j(x)\right)\\
&=&
\left(\begin{matrix}
\frac{1}{a}\mathcal{F}_{p+1}(x) & \frac{d(x)}{a}\mathcal{F}_{p}(x) \\
\frac{1}{a}\mathcal{F}_{p}(x) & \frac{d(x)}{a}\mathcal{F}_{p-1}(x)
\end{matrix}\right)
\cdot
\sum_{j=0}^n
\left(\begin{matrix}
n \\
j
\end{matrix}\right)
c^j(x)d^{n-j}(x)
\left(\begin{matrix}
\frac{1}{a}\mathcal{F}_{j+1}(x) & \frac{d(x)}{a}\mathcal{F}_{j}(x) \\
\frac{1}{a}\mathcal{F}_{j}(x) & \frac{d(x)}{a}\mathcal{F}_{j-1}(x)
\end{matrix}\right).
\end{eqnarray*}
Then by Corollary \ref{f-Honsberger} and the $(1,2)$ entry of the first matrix in the above equality, we have
\begin{eqnarray*}
a\mathcal{F}_{2n+p}(x)&=&\sum_{j=0}^n
\left(\begin{matrix}
n \\
j
\end{matrix}\right)
c^j(x)d^{n-j}(x)\left(\mathcal{F}_{p}(x)\mathcal{F}_{j+1}(x)+d(x)\mathcal{F}_{p-1}(x)\mathcal{F}_{j}(x)\right)\\
&=&a\sum_{j=0}^{n}
\left(\begin{matrix}
n \\
j
\end{matrix}\right)
c^j(x)d^{n-j}(x)\mathcal{F}_{j+p}(x).
\end{eqnarray*}
\end{proof}

\begin{example}
Let $\mathcal{F}_n(x)$ be the Fibonacci polynomial $F_n(x)$ in which $a=1, c(x)=x, d(x)=1$ in Eq. (\ref{f}). By Theorem \ref{f-series}, we have
$${F}_{2n+p}(x)
=\sum_{j=0}^{n}
\left(\begin{matrix}
n \\
j
\end{matrix}\right)
x^j{F}_{j+p}(x).$$
Given $n=2$ and $p=1$, we have $$F_5(x)=F_1(x)+2xF_2(x)+x^2F_3(x).$$
Indeed, this equality holds for $F_1(x)=1, F_2(x)=x, F_3(x)=x^2+1$ and $F_5(x)=x^4+3x^2+1$.
\end{example}

\section{Lucas type polynomials}

Based on the results of Fibonacci type polynomials, some identities of Lucas type polynomials will be demonstrated in this section.
Throughout this section, we assume $\mathcal{L}_{n}(x)$ and $\mathcal{F}_{n}(x)$ have the same recursive formula with $\mathcal{L}_0(x)=\mathcal{F}_1(x)$, that is,
for $n\geq2$,
$$\mathcal{F}_{0}(x)=0,\ \mathcal{F}_{1}(x)=a\ \mbox{and}\ \mathcal{F}_{n}(x)=c(x)\mathcal{F}_{n-1}(x)+d(x)\mathcal{F}_{n-2}(x),$$
and
\begin{equation}\label{l}
\mathcal{L}_{0}(x)=a,\ \mathcal{L}_{1}(x)=b(x)\ \mbox{and}\ \mathcal{L}_{n}(x)=c(x)\mathcal{L}_{n-1}(x)+d(x)\mathcal{L}_{n-2}(x)
\end{equation}
where $a\in\Bbb{R}\setminus\{0\}$. By applying Theorem \ref{f-type}, one can connect $\mathcal{L}_{n}(x)$ with $\mathcal{F}_{n}(x)$ below.

\begin{theorem}\label{lf}
Let $\mathcal{F}_n(x)$ and $\mathcal{L}_n(x)$ be the Fibonacci type polynomial and Lucas type polynomial respectively with $\mathcal{L}_0(x)=\mathcal{F}_1(x)=a$.
Then for each $n\in\Bbb{N}$,
$$
\left(\begin{matrix}
\mathcal{L}_{n+2}(x) & \mathcal{L}_{n+1}(x) \\
\mathcal{L}_{n+1}(x) & \mathcal{L}_{n}(x)
\end{matrix}\right)
=
\left(\begin{matrix}
\frac{b(x)c(x)+ad(x)}{a} & \frac{b(x)d(x)}{a} \\
\frac{b(x)}{a}& d
\end{matrix}\right)
\left(\begin{matrix}
\mathcal{F}_{n+1}(x) & \mathcal{F}_n(x) \\
\mathcal{F}_n(x) & \mathcal{F}_{n-1}(x)
\end{matrix}\right)$$
\end{theorem}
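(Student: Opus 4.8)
The plan is to treat the claimed identity purely as a relation among three $2\times 2$ matrices and to exploit that the coefficient matrix on the right is itself a polynomial in $Q(x)$. Write $M(x)$ for the coefficient matrix, $Y_n(x)=\left(\begin{matrix}\mathcal{F}_{n+1}(x) & \mathcal{F}_n(x)\\ \mathcal{F}_n(x) & \mathcal{F}_{n-1}(x)\end{matrix}\right)$ for the Fibonacci-type matrix, and $X_n(x)=\left(\begin{matrix}\mathcal{L}_{n+2}(x) & \mathcal{L}_{n+1}(x)\\ \mathcal{L}_{n+1}(x) & \mathcal{L}_n(x)\end{matrix}\right)$ for the Lucas matrix on the left, so that the assertion is $X_n(x)=M(x)Y_n(x)$. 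The first thing I would record is the factorization $M(x)=\frac{b(x)}{a}Q(x)+d(x)I$, which is immediate from the shape of $Q(x)$. Since $M(x)$ is a polynomial in $Q(x)$, it commutes with $Q(x)$ and hence with every power $Q^n(x)$; this is the structural fact that makes the identity collapse to a single base case.

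Next I would observe that both matrix sequences advance by left multiplication by $Q(x)$. Applying the column recursion $\left(\begin{matrix}\mathcal{F}_{k+1}(x)\\ \mathcal{F}_k(x)\end{matrix}\right)=Q(x)\left(\begin{matrix}\mathcal{F}_k(x)\\ \mathcal{F}_{k-1}(x)\end{matrix}\right)$ to each column gives $Y_{n+1}(x)=Q(x)Y_n(x)$, and since $\mathcal{L}_n(x)$ obeys the same recursion the identical argument yields $X_{n+1}(x)=Q(x)X_n(x)$. I would then induct on $n$: assuming $X_n(x)=M(x)Y_n(x)$, I get $X_{n+1}(x)=Q(x)X_n(x)=Q(x)M(x)Y_n(x)=M(x)Q(x)Y_n(x)=M(x)Y_{n+1}(x)$, where the third equality is exactly the commutativity from the first step. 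For the base case I would take $n=1$ to avoid the index $\mathcal{F}_{-1}(x)$: computing $X_1(x)$ from $\mathcal{L}_1(x)=b(x)$, $\mathcal{L}_2(x)=b(x)c(x)+ad(x)$, $\mathcal{L}_3(x)=b(x)c^2(x)+ac(x)d(x)+b(x)d(x)$, and computing $M(x)Y_1(x)$ from $\mathcal{F}_0(x)=0$, $\mathcal{F}_1(x)=a$, $\mathcal{F}_2(x)=ac(x)$, one checks the two matrices agree entrywise.

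I do not expect a genuine obstacle; the only things to get right are the factorization of $M(x)$ and the bookkeeping of indices. In fact the content of the theorem is the single scalar identity $\mathcal{L}_n(x)=\frac{b(x)}{a}\mathcal{F}_n(x)+d(x)\mathcal{F}_{n-1}(x)$, which is precisely what the $(2,1)$ and $(2,2)$ entries assert and which can be proved on its own by a two-term induction from the shared recursion; the $(1,1)$ and $(1,2)$ entries then reduce to it through $\mathcal{F}_{n+2}(x)=c(x)\mathcal{F}_{n+1}(x)+d(x)\mathcal{F}_n(x)$. One caveat is that if $\Bbb{N}$ is taken to include $0$, the case $n=0$ forces the interpretation $\mathcal{F}_{-1}(x)=a/d(x)$ and hence needs $d(x)\neq 0$; beginning the induction at $n=1$ avoids this. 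A variant more in the spirit of Theorem \ref{f-type} is to write $Y_n(x)=aQ^n(x)D^{-1}$ with $D=\mathrm{diag}(1,d(x))$ and $X_n(x)=Q^n(x)X_0(x)$, verify $X_0(x)=aM(x)D^{-1}$ once, and let commutativity finish the argument in a single line.
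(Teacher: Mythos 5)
Your proof is correct, but it takes a genuinely different route from the paper's. The paper works entirely at the scalar level: it proves the connecting identity $\mathcal{L}_n(x)=\frac{b(x)}{a}\mathcal{F}_n(x)+d(x)\mathcal{F}_{n-1}(x)$ by a two-term induction (base cases $n=1$ and $n=2$), then substitutes the recursion for $\mathcal{F}_n(x)$ to get the second form $\mathcal{L}_n(x)=\frac{b(x)c(x)+ad(x)}{a}\mathcal{F}_{n-1}(x)+\frac{b(x)d(x)}{a}\mathcal{F}_{n-2}(x)$; reading these two identities at indices $n$, $n+1$, $n+2$ gives the four entries of the matrix equation. That is exactly the reduction you describe in your closing remark, but for the paper it is the whole proof, not an aside. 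Your main argument instead works at the matrix level: the factorization $M(x)=\frac{b(x)}{a}Q(x)+d(x)I$ (which is correct, and which the paper never writes down) shows that $M(x)$ commutes with every power of $Q(x)$; since both $X_n(x)$ and $Y_n(x)$ advance by left multiplication by $Q(x)$, a single correctly verified base case $n=1$ (your values $\mathcal{L}_2(x)=b(x)c(x)+ad(x)$ and $\mathcal{L}_3(x)=b(x)c^2(x)+ac(x)d(x)+b(x)d(x)$ check out) plus commutativity closes the induction. What your route buys: one base case instead of two, a structural explanation of where the otherwise odd-looking coefficient matrix comes from, and a framework that delivers the paper's subsequent Theorem \ref{l-type} almost for free, since $aM(x)$ commutes with $Q^n(x)$. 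What the paper's route buys: it is purely scalar and more elementary, and the identity $\mathcal{L}_n(x)=\frac{b(x)}{a}\mathcal{F}_n(x)+d(x)\mathcal{F}_{n-1}(x)$ that it isolates is of independent interest. Two of your side points are also apt: the theorem as stated does need $\Bbb{N}$ to start at $1$ (else $\mathcal{F}_{-1}(x)$ appears), and your variant via $Y_n(x)=aQ^n(x)D^{-1}$ requires $d(x)\neq 0$, a hypothesis that neither your main argument nor the paper's proof needs.
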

\begin{proof}
First, we will prove $\mathcal{L}_n(x)=\frac{b(x)}{a}\mathcal{F}_n(x)+d(x)\mathcal{F}_{n-1}(x)$ holds for each $n\in\Bbb{N}$.
Let $n=1$. Then
$$\mathcal{L}_1(x)=b(x)=\frac{b(x)}{a}\mathcal{F}_1(x)+d(x)\mathcal{F}_{0}(x).$$
Let $n=2$. Then
$$\mathcal{L}_2(x)=b(x)c(x)+ad(x)=\frac{b(x)}{a}\mathcal{F}_2(x)+d(x)\mathcal{F}_{1}(x).$$
Assume this equality hods for $n=k-1$ and $k$.
Let $n=k+1$. Then
\begin{eqnarray*}
\mathcal{L}_{k+1}(x)&=&c(x)\mathcal{L}_{k}(x)+d(x)\mathcal{L}_{k-1}(x)\\
&=&c(x)\left(\frac{b(x)}{a}\mathcal{F}_k(x)+d(x)\mathcal{F}_{k-1}(x)\right)+d(x)\left(\frac{b(x)}{a}\mathcal{F}_{k-1}(x)+d(x)\mathcal{F}_{k-2}(x)\right)\\
&=&\frac{b(x)}{a}\left(c(x)\mathcal{F}_k(x)+d(x)\mathcal{F}_{k-1}(x)\right)+d(x)\left(c(x)\mathcal{F}_{k-1}(x)+d(x)\mathcal{F}_{k-2}(x)\right)\\
&=&\frac{b(x)}{a}\mathcal{F}_{k+1}(x)+d(x)\mathcal{F}_k(x).
\end{eqnarray*}
By induction, $\mathcal{L}_n(x)=\frac{b(x)}{a}\mathcal{F}_n(x)+d(x)\mathcal{F}_{n-1}(x)$ holds for all $n\in\Bbb{N}$.
On the other hand, we have
\begin{eqnarray*}
\mathcal{L}_n(x)&=&\frac{b(x)}{a}\mathcal{F}_n(x)+d(x)\mathcal{F}_{n-1}(x)\\
&=&\frac{b(x)}{a}\left(c(x)\mathcal{F}_{n-1}(x)+d(x)\mathcal{F}_{n-2}(x)\right)+d(x)\mathcal{F}_{n-1}(x)\\
&=& \frac{b(x)c(x)+ad(x)}{a}\mathcal{F}_{n-1}(x)+ \frac{b(x)d(x)}{a}\mathcal{F}_{n-2}(x).
\end{eqnarray*}
One has the result by these two equalities
$$\mathcal{L}_n(x)=\frac{b(x)}{a}\mathcal{F}_n(x)+d(x)\mathcal{F}_{n-1}(x)$$
and
$$\mathcal{L}_n(x)=\frac{b(x)c(x)+ad(x)}{a}\mathcal{F}_{n-1}(x)+ \frac{b(x)d(x)}{a}\mathcal{F}_{n-2}(x).$$
\end{proof}

Next, we will demonstrate the relation between Lucas type polynomials and the Fibonacci type $Q(x)$ matrix .

\begin{theorem}\label{l-type}
Let $\mathcal{L}_n(x)$ be the Lucas type polynomial.
Then for each $n\in\Bbb{N}$,
$$
\left(\begin{matrix}
\mathcal{L}_{n+2}(x) & d(x)\mathcal{L}_{n+1}(x) \\
\mathcal{L}_{n+1}(x) & d(x)\mathcal{L}_{n}(x)
\end{matrix}\right)
=
\left(\begin{matrix}
\mathcal{L}_2(x) & d(x)\mathcal{L}_1(x) \\
\mathcal{L}_1(x) & d(x)\mathcal{L}_0(x)
\end{matrix}\right)
Q^n(x)$$
\end{theorem}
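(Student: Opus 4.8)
The plan is to prove Theorem~\ref{l-type} by induction on $n$, using the matrix recurrence that $Q(x)$ encodes, exactly as was done for Theorem~\ref{f-type}. Write
$$M_n(x)=
\left(\begin{matrix}
\mathcal{L}_{n+2}(x) & d(x)\mathcal{L}_{n+1}(x) \\
\mathcal{L}_{n+1}(x) & d(x)\mathcal{L}_{n}(x)
\end{matrix}\right),
\qquad
M_0(x)=
\left(\begin{matrix}
\mathcal{L}_2(x) & d(x)\mathcal{L}_1(x) \\
\mathcal{L}_1(x) & d(x)\mathcal{L}_0(x)
\end{matrix}\right),$$
so the claim reads $M_n(x)=M_0(x)\,Q^n(x)$. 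The base case $n=0$ is immediate since $Q^0(x)=I$. For the inductive step, assume $M_k(x)=M_0(x)Q^k(x)$; I would then show $M_{k+1}(x)=M_k(x)\,Q(x)$, which combined with the hypothesis gives $M_{k+1}(x)=M_0(x)Q^{k+1}(x)$.

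The key computation is the single-step relation $M_{k+1}(x)=M_k(x)Q(x)$. First I would verify the left-multiplication claim $M_{k+1}(x)=M_k(x)Q(x)$ directly: multiplying $M_k(x)$ on the right by $Q(x)=\left(\begin{smallmatrix}c(x)&d(x)\\1&0\end{smallmatrix}\right)$ and simplifying the four entries using the Lucas recurrence $\mathcal{L}_{m}(x)=c(x)\mathcal{L}_{m-1}(x)+d(x)\mathcal{L}_{m-2}(x)$ should reproduce $M_{k+1}(x)$. For instance the $(1,1)$ entry of the product is $c(x)\mathcal{L}_{k+2}(x)+d(x)\mathcal{L}_{k+1}(x)=\mathcal{L}_{k+3}(x)$, matching the $(1,1)$ entry of $M_{k+1}(x)$, and the remaining three entries follow the same way. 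This is the heart of the argument, but it is a routine recurrence check rather than a genuine obstacle.

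An alternative, and perhaps cleaner, route avoids re-proving the recurrence at the matrix level by invoking Theorem~\ref{lf}. That theorem factors the symmetric Lucas matrix as a constant matrix times the Fibonacci matrix, and Theorem~\ref{f-type} already identifies the Fibonacci matrix with $Q^n(x)$. One would need only to reconcile the matrix appearing in Theorem~\ref{lf} (whose columns are $\mathcal{L}_{n+2},\mathcal{L}_{n+1}$ and $\mathcal{L}_{n+1},\mathcal{L}_n$) with the matrix $M_n(x)$ here (which carries an extra factor of $d(x)$ in its second column). Tracking this column scaling, together with substituting $Q^n(x)$ for the Fibonacci block, should collapse Theorem~\ref{lf} into the present statement after recognizing that the constant prefactor equals $M_0(x)Q^{-n}(x)\cdot Q^n(x)$ appropriately rearranged; however, because this requires $Q(x)$ to be invertible and involves bookkeeping with the column factor $d(x)$, the direct induction above is the safer primary plan.

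The main obstacle I anticipate is purely notational rather than conceptual: one must keep the $d(x)$-weighted second column consistent throughout the induction so that the right-multiplication by $Q(x)$ shifts all four indices by one simultaneously. Once the single-step identity $M_{k+1}(x)=M_k(x)Q(x)$ is confirmed entry-by-entry via the Lucas recurrence, the induction closes immediately and the theorem follows.
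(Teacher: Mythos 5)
Your primary plan (direct induction) is correct, but it is genuinely different from the paper's proof. The single-step identity you rely on does hold: right-multiplying $M_k(x)$ by $Q(x)$ gives first column $c(x)\,(\text{col}_1 M_k)+(\text{col}_2 M_k)$, whose entries are $c(x)\mathcal{L}_{k+2}(x)+d(x)\mathcal{L}_{k+1}(x)=\mathcal{L}_{k+3}(x)$ and $c(x)\mathcal{L}_{k+1}(x)+d(x)\mathcal{L}_{k}(x)=\mathcal{L}_{k+2}(x)$, and second column $d(x)\,(\text{col}_1 M_k)$, i.e. $d(x)\mathcal{L}_{k+2}(x)$ and $d(x)\mathcal{L}_{k+1}(x)$; so $M_{k+1}(x)=M_k(x)Q(x)$ and the induction closes. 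This mirrors how the paper proves Theorem~\ref{f-type}, but applied directly to the Lucas matrix. The paper's actual proof of Theorem~\ref{l-type} is your ``alternative'' route: it writes
$$
\left(\begin{matrix}
\mathcal{L}_{n+2}(x) & d(x)\mathcal{L}_{n+1}(x) \\
\mathcal{L}_{n+1}(x) & d(x)\mathcal{L}_{n}(x)
\end{matrix}\right)
=
\left(\begin{matrix}
\mathcal{L}_{n+2}(x) & \mathcal{L}_{n+1}(x) \\
\mathcal{L}_{n+1}(x) & \mathcal{L}_{n}(x)
\end{matrix}\right)
\left(\begin{matrix}
1 & 0 \\
0 & d(x)
\end{matrix}\right),
$$
applies Theorem~\ref{lf} to factor the symmetric Lucas matrix through the Fibonacci matrix, then redistributes the scalar $\frac1a$ and the diagonal factor so that the Fibonacci block becomes exactly the $Q^n(x)$ matrix of Theorem~\ref{f-type}, and finally recognizes the constant prefactor as $\bigl(\begin{smallmatrix}\mathcal{L}_2 & d\mathcal{L}_1\\ \mathcal{L}_1 & d\mathcal{L}_0\end{smallmatrix}\bigr)$ via $\mathcal{L}_2=b c+ad$, $\mathcal{L}_1=b$, $\mathcal{L}_0=a$. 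Note that, contrary to your concern, this route does \emph{not} require $Q(x)$ to be invertible --- no inverse appears, only commuting diagonal scalings --- so your stated reason for preferring the induction is misplaced. What your approach buys instead is independence: your induction uses only the Lucas recurrence, so it proves the identity for any Lucas type polynomial without the standing hypothesis $\mathcal{L}_0(x)=\mathcal{F}_1(x)=a$ and without invoking Theorems~\ref{lf} and~\ref{f-type}; the paper's proof buys economy by reusing its existing machinery and making the structural link between $\mathcal{L}_n(x)$ and $\mathcal{F}_n(x)$ explicit.
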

\begin{proof}
By Theorem \ref{f-type} and Theorem \ref{lf}, we have
\begin{eqnarray*}
&&\left(\begin{matrix}
\mathcal{L}_{n+2}(x) & d(x)\mathcal{L}_{n+1}(x) \\
\mathcal{L}_{n+1}(x) & d(x)\mathcal{L}_{n}(x)
\end{matrix}\right)\\
&=&
\left(\begin{matrix}
\mathcal{L}_{n+2}(x) & \mathcal{L}_{n+1}(x) \\
\mathcal{L}_{n+1}(x) & \mathcal{L}_{n}(x)
\end{matrix}\right)
\left(\begin{matrix}
1 & 0 \\
0 & d(x)
\end{matrix}\right)
\\
&=&
\left(\begin{matrix}
\frac{b(x)c(x)+ad(x)}{a} & \frac{b(x)d(x)}{a} \\
\frac{b(x)}{a}& d
\end{matrix}\right)
\left(\begin{matrix}
\mathcal{F}_{n+1}(x) & \mathcal{F}_n(x) \\
\mathcal{F}_n(x) & \mathcal{F}_{n-1}(x)
\end{matrix}\right)
\left(\begin{matrix}
1 & 0 \\
0 & d(x)
\end{matrix}\right)\\
&=&
\left(\begin{matrix}
b(x)c(x)+ad(x) & b(x)d(x) \\
b(x)& ad(x)
\end{matrix}\right)
\left(\begin{matrix}
\frac{1}{a}\mathcal{F}_{n+1}(x) & \frac{d(x)}{a}\mathcal{F}_n(x) \\
\frac{1}{a}\mathcal{F}_n(x) & \frac{d(x)}{a}\mathcal{F}_{n-1}(x)
\end{matrix}\right)\\
&=&
\left(\begin{matrix}
\mathcal{L}_2(x) & d(x)\mathcal{L}_1(x) \\
\mathcal{L}_1(x) & d(x)\mathcal{L}_0(x)
\end{matrix}\right)
\left(\begin{matrix}
c(x) & d(x) \\
1 & 0
\end{matrix}\right)^n
\end{eqnarray*}
for each $n\in\Bbb{N}$.
\end{proof}

Using Theorem \ref{l-type}, one has the Cassini identity for the Lucas type polynomial $\mathcal{L}_n(x)$.

\begin{corollary}\label{l-Cassini}
Let $\mathcal{L}_n(x)$ be the Lucas type polynomial.
Then for each $n\in\Bbb{N}$,
$$\mathcal{L}_{n+2}(x)\mathcal{L}_{n}(x)-\mathcal{L}^2_{n+1}(x)=\left(\mathcal{L}_{2}(x)\mathcal{L}_{0}(x)-\mathcal{L}^2_{1}(x)\right)(-d(x))^n.$$
\end{corollary}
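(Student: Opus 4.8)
The plan is to prove the identity by taking determinants of both sides of the matrix equation furnished by Theorem \ref{l-type}, exactly as the Cassini identity for $\mathcal{F}_n(x)$ (Corollary \ref{f-Cassini}) was extracted from Theorem \ref{f-type}. Since the determinant is multiplicative, applying $\det$ to
$$
\left(\begin{matrix}
\mathcal{L}_{n+2}(x) & d(x)\mathcal{L}_{n+1}(x) \\
\mathcal{L}_{n+1}(x) & d(x)\mathcal{L}_{n}(x)
\end{matrix}\right)
=
\left(\begin{matrix}
\mathcal{L}_2(x) & d(x)\mathcal{L}_1(x) \\
\mathcal{L}_1(x) & d(x)\mathcal{L}_0(x)
\end{matrix}\right)
Q^n(x)
$$
turns the right-hand side into a product of three scalar determinants, and it remains only to evaluate each factor.

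Next I would compute the three determinants. Expanding the left-hand matrix and pulling the factor $d(x)$ out of its second column gives $d(x)\bigl(\mathcal{L}_{n+2}(x)\mathcal{L}_{n}(x)-\mathcal{L}^2_{n+1}(x)\bigr)$; the same manipulation applied to the first matrix on the right gives $d(x)\bigl(\mathcal{L}_{2}(x)\mathcal{L}_{0}(x)-\mathcal{L}^2_{1}(x)\bigr)$. For the remaining factor I would use $\det Q^n(x)=\bigl(\det Q(x)\bigr)^n$ together with
$$
\det Q(x)=\det\left(\begin{matrix} c(x) & d(x) \\ 1 & 0 \end{matrix}\right)=-d(x),
$$
so that $\det Q^n(x)=(-d(x))^n$. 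Assembling the three pieces yields
$$
d(x)\bigl(\mathcal{L}_{n+2}(x)\mathcal{L}_{n}(x)-\mathcal{L}^2_{n+1}(x)\bigr)
= d(x)\bigl(\mathcal{L}_{2}(x)\mathcal{L}_{0}(x)-\mathcal{L}^2_{1}(x)\bigr)(-d(x))^n.
$$

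The only step that needs a word of justification — and hence the sole potential obstacle — is cancelling the common factor $d(x)$ from both sides. Because $d(x)$ is a fixed nonzero polynomial in $\Bbb{Q}[x]$ and $\Bbb{Q}[x]$ is an integral domain, this cancellation is legitimate and the resulting equation is a genuine identity of polynomials (equivalently, one may pass to the field of fractions $\Bbb{Q}(x)$ to divide, then note that both sides are polynomials). Cancelling $d(x)$ leaves precisely
$$
\mathcal{L}_{n+2}(x)\mathcal{L}_{n}(x)-\mathcal{L}^2_{n+1}(x)=\bigl(\mathcal{L}_{2}(x)\mathcal{L}_{0}(x)-\mathcal{L}^2_{1}(x)\bigr)(-d(x))^n,
$$
which is the claimed Cassini identity for $\mathcal{L}_n(x)$.
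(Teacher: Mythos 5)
Your proposal is correct and follows exactly the paper's own route: take determinants in the matrix identity of Theorem \ref{l-type}, use multiplicativity and $\det Q(x)=-d(x)$, and read off the identity. The only difference is that you make explicit the cancellation of the common factor $d(x)$ (valid since $d(x)$ is a nonzero polynomial and $\Bbb{Q}[x]$ is an integral domain), a step the paper's proof leaves implicit.
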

\begin{proof}
By Theorem \ref{l-type}, we have
$$
\det
\left(\begin{matrix}
\mathcal{L}_{n+2}(x) & d(x)\mathcal{L}_{n+1}(x) \\
\mathcal{L}_{n+1}(x) & d(x)\mathcal{L}_{n}(x)
\end{matrix}\right)
=
\det
\left(\begin{matrix}
\mathcal{L}_2(x) & d(x)\mathcal{L}_1(x) \\
\mathcal{L}_1(x) & d(x)\mathcal{L}_0(x)
\end{matrix}\right)
\left(\det\left(\begin{matrix}
c(x) & d(x) \\
1 & 0
\end{matrix}\right)\right)^n.$$
Hence
$$\mathcal{L}_{n+2}(x)\mathcal{L}_{n}(x)-\mathcal{L}^2_{n+1}(x)=\left(\mathcal{L}_{2}(x)\mathcal{L}_{0}(x)-\mathcal{L}^2_{1}(x)\right)(-d(x))^n.$$
\end{proof}

\begin{example}
Let $a=2, b(x)=2x, c(x)=2x, d(x)=1$ in Eq. (\ref{l}). Then $\mathcal{L}_n(x)=D_n(x)$ is the Pell-Lucas polynomial as defined in Table \ref{table}. By Corollary \ref{l-Cassini}, the Cassini identity for the Pell-Lucas polynomial $D_n(x)$ is given by
$$D_{n+2}(x)D_{n}(x)-D^2_{n+1}(x)=(4x^2+4)(-1)^n.$$
\end{example}

By Corollary \ref{l-Cassini}, we have the result below.

\begin{corollary}
Let $\mathcal{L}_n(x)$ be the Lucas type polynomial.
Then for each $n\in\Bbb{N}$,
$$c(x)\mathcal{L}_{n+1}(x)\mathcal{L}_{n}(x)+d(x)\mathcal{L}^2_{n}(x)-\mathcal{L}^2_{n+1}(x)=\left(\mathcal{L}_{2}(x)\mathcal{L}_{0}(x)-\mathcal{L}^2_{1}(x)\right)(-d(x))^n.$$
\end{corollary}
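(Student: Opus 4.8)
The plan is to reduce the identity directly to the Lucas-type Cassini identity of Corollary \ref{l-Cassini} by eliminating the term $\mathcal{L}_{n+2}(x)$ via the defining recursion. This parallels exactly the argument used to pass from Corollary \ref{f-Cassini} to its companion corollary in the Fibonacci-type case, so I expect the proof to be a short algebraic substitution rather than anything requiring the $Q(x)$ matrix afresh.

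First I would recall from Corollary \ref{l-Cassini} that
$$\mathcal{L}_{n+2}(x)\mathcal{L}_{n}(x)-\mathcal{L}^2_{n+1}(x)=\left(\mathcal{L}_{2}(x)\mathcal{L}_{0}(x)-\mathcal{L}^2_{1}(x)\right)(-d(x))^n.$$
Next, since $\mathcal{L}_n(x)$ satisfies the recursion in Eq. (\ref{l}), one has $\mathcal{L}_{n+2}(x)=c(x)\mathcal{L}_{n+1}(x)+d(x)\mathcal{L}_{n}(x)$. Substituting this into the product $\mathcal{L}_{n+2}(x)\mathcal{L}_{n}(x)$ gives
$$\mathcal{L}_{n+2}(x)\mathcal{L}_{n}(x)=c(x)\mathcal{L}_{n+1}(x)\mathcal{L}_{n}(x)+d(x)\mathcal{L}^2_{n}(x),$$
and replacing the first term on the left-hand side of the Cassini identity by this expression yields precisely the claimed equality.

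The computation is entirely mechanical, so there is no genuine obstacle; the only point to keep track of is to apply the recursion in the index-shifted form $\mathcal{L}_{n+2}(x)=c(x)\mathcal{L}_{n+1}(x)+d(x)\mathcal{L}_{n}(x)$, so that the product expands into the three terms $c(x)\mathcal{L}_{n+1}(x)\mathcal{L}_{n}(x)$, $d(x)\mathcal{L}^2_{n}(x)$, and $-\mathcal{L}^2_{n+1}(x)$ appearing in the statement. No invertibility or sign hypothesis on $d(x)$ is needed here, so the result holds for every Lucas type polynomial and every $n\in\Bbb{N}$.
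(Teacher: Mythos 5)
Your proposal is correct and follows exactly the paper's own proof: both apply Corollary \ref{l-Cassini} and then eliminate $\mathcal{L}_{n+2}(x)$ via the recursion $\mathcal{L}_{n+2}(x)=c(x)\mathcal{L}_{n+1}(x)+d(x)\mathcal{L}_{n}(x)$, expanding the product $\mathcal{L}_{n+2}(x)\mathcal{L}_{n}(x)$ into the three claimed terms. Your closing observation that no hypothesis on $d(x)$ is needed is also consistent with the paper, which imposes no such condition here.
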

\begin{proof}
By
$$\mathcal{L}_{n+2}(x)\mathcal{L}_{n}(x)-\mathcal{L}^2_{n+1}(x)=\left(\mathcal{L}_{2}(x)\mathcal{L}_{0}(x)-\mathcal{L}^2_{1}(x)\right)(-d(x))^n$$
and
$$\mathcal{L}_{n+2}(x)=c(x)\mathcal{L}_{n+1}(x)+d(x)\mathcal{L}_{n}(x),$$
we have
\begin{eqnarray*}
&&\left(\mathcal{L}_{2}(x)\mathcal{L}_{0}(x)-\mathcal{L}^2_{1}(x)\right)(-d(x))^n\\
&=&\left(c(x)\mathcal{L}_{n+1}(x)+d(x)\mathcal{L}_{n}(x)\right)\mathcal{L}_{n}(x)-\mathcal{L}^2_{n+1}(x)\\
&=&c(x)\mathcal{L}_{n+1}(x)\mathcal{L}_{n}(x)+d(x)\mathcal{L}^2_{n}(x)-\mathcal{L}^2_{n+1}(x).
\end{eqnarray*}
\end{proof}

Using $Q^2(x)=c(x)Q(x)+d(x)I$ again, we have the expression of $\mathcal{L}_{n}(x)$.

\begin{theorem}\label{l-series}
Let $\mathcal{L}_n(x)$ be the Lucas type polynomial.
Then for each $n,p\in\Bbb{N}$,
$$
\mathcal{L}_{2n+p}(x)=\sum_{j=0}^{n}
\left(\begin{matrix}
n \\
j
\end{matrix}\right)
c^j(x)d^{n-j}(x)\mathcal{L}_{p+j}(x).
$$
\end{theorem}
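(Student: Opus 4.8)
The plan is to reproduce the argument of Theorem~\ref{f-series}, using the factorization $Q^{2}(x)=c(x)Q(x)+d(x)I$ together with Theorem~\ref{l-type} in place of Theorem~\ref{f-type}. Denote by $M(x)$ the matrix $\left(\begin{matrix}\mathcal{L}_2(x) & d(x)\mathcal{L}_1(x)\\ \mathcal{L}_1(x) & d(x)\mathcal{L}_0(x)\end{matrix}\right)$, so that Theorem~\ref{l-type} asserts $M(x)Q^m(x)=\left(\begin{matrix}\mathcal{L}_{m+2}(x) & d(x)\mathcal{L}_{m+1}(x)\\ \mathcal{L}_{m+1}(x) & d(x)\mathcal{L}_{m}(x)\end{matrix}\right)$ for every $m\in\Bbb{N}$. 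The whole statement will come out of reading off a single entry of a matrix product.

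First I would expand $\left(Q^2(x)\right)^n=\left(c(x)Q(x)+d(x)I\right)^n$ by the binomial theorem. This is legitimate because $c(x)Q(x)$ and $d(x)I$ commute (the identity commutes with every matrix), giving $\left(Q^2(x)\right)^n=\sum_{j=0}^n\binom{n}{j}c^j(x)d^{n-j}(x)Q^j(x)$. Multiplying the identity $Q^{2n+p}(x)=Q^p(x)\left(Q^2(x)\right)^n$ on the left by $M(x)$ and using $Q^p(x)Q^j(x)=Q^{p+j}(x)$ then yields $M(x)Q^{2n+p}(x)=\sum_{j=0}^n\binom{n}{j}c^j(x)d^{n-j}(x)\,M(x)Q^{p+j}(x)$.

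Next I would evaluate both sides via Theorem~\ref{l-type}. The $(2,2)$ entry of $M(x)Q^{2n+p}(x)$ is $d(x)\mathcal{L}_{2n+p}(x)$, while the $(2,2)$ entry of each summand $M(x)Q^{p+j}(x)$ on the right is $d(x)\mathcal{L}_{p+j}(x)$. Comparing these $(2,2)$ entries gives $d(x)\mathcal{L}_{2n+p}(x)=\sum_{j=0}^n\binom{n}{j}c^j(x)d^{n-j}(x)\,d(x)\mathcal{L}_{p+j}(x)$, and cancelling the common factor $d(x)$ (permissible since $d(x)$ is a nonzero polynomial and the polynomial ring is an integral domain) produces the claimed identity.

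The argument is essentially routine once the set-up is in place; the only points demanding care are the commutativity that justifies the binomial expansion and the cancellation of $d(x)$ at the end. I expect the sole (mild) obstacle to be bookkeeping: one must select the entry of the $2\times2$ product that reproduces the identity in exactly the stated form, since reading the $(2,1)$ or $(1,1)$ entry instead yields the same statement shifted in $p$. Comparing the $(2,2)$ entries is the cleanest route to the desired expression.
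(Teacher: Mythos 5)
Your proposal is correct and follows essentially the same route as the paper: both multiply $Q^{2n+p}(x)=Q^p(x)\left(Q^2(x)\right)^n$ on the left by the matrix $M(x)$ from Theorem~\ref{l-type}, expand $\left(c(x)Q(x)+d(x)I\right)^n$ by the binomial theorem, and read off the $(2,2)$ entry. Your explicit justification of the commutativity behind the binomial expansion and of the cancellation of the common factor $d(x)$ (which the paper performs silently) is a welcome refinement, not a different argument.
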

\begin{proof}
By Theorem \ref{l-type}, we have
\begin{eqnarray*}
&&\left(\begin{matrix}
\mathcal{L}_{2n+p+2}(x) & d(x)\mathcal{L}_{2n+p+1}(x) \\
\mathcal{L}_{2n+p+1}(x) & d(x)\mathcal{L}_{2n+p}(x)
\end{matrix}\right)
\\
&=&
\left(\begin{matrix}
\mathcal{L}_2(x) & d(x)\mathcal{L}_1(x) \\
\mathcal{L}_1(x) & d(x)\mathcal{L}_0(x)
\end{matrix}\right)
Q^{2n+p}(x)\\
&=&
\left(\begin{matrix}
\mathcal{L}_2(x) & d(x)\mathcal{L}_1(x) \\
\mathcal{L}_1(x) & d(x)\mathcal{L}_0(x)
\end{matrix}\right)
Q^{p}(x)\left(Q^{2}(x)\right)^n\\
&=&
\left(\begin{matrix}
\mathcal{L}_{p+2}(x) & d(x)\mathcal{L}_{p+1}(x) \\
\mathcal{L}_{p+1}(x) & d(x)\mathcal{L}_{p}(x)
\end{matrix}\right)
\left(c(x)Q(x)+d(x)I\right)^n\\
&=&
\left(\begin{matrix}
\mathcal{L}_{p+2}(x) & d(x)\mathcal{L}_{p+1}(x) \\
\mathcal{L}_{p+1}(x) & d(x)\mathcal{L}_{p}(x)
\end{matrix}\right)
\left(\sum_{j=0}^n
\left(\begin{matrix}
n \\
j
\end{matrix}\right)
c^j(x)d^{n-j}(x)Q^j(x)\right)\\
&=&
\sum_{j=0}^n
\left(\begin{matrix}
n \\
j
\end{matrix}\right)
c^j(x)d^{n-j}(x)
\left(\begin{matrix}
\mathcal{L}_{p+j+2}(x) & d(x)\mathcal{L}_{p+j+1}(x) \\
\mathcal{L}_{p+j+1}(x) & d(x)\mathcal{L}_{p+j}(x)
\end{matrix}\right)
\end{eqnarray*}
By considering the $(2,2)$ entry of the first matrix in the above equality, we have
$$
\mathcal{L}_{2n+p}(x)=\sum_{j=0}^{n}
\left(\begin{matrix}
n \\
j
\end{matrix}\right)
c^j(x)d^{n-j}(x)\mathcal{L}_{p+j}(x).
$$
\end{proof}

\begin{example}
Let $\mathcal{L}_n(x)$ be the Morgan-Voyce polynomial $C_n(x)$ in which $a=2, b(x)=x+2, c(x)=x+2, d(x)=-1$ in Eq. (\ref{l}). By Theorem \ref{l-series}, we have
$${C}_{2n+p}(x)
=\sum_{j=0}^{n}
\left(\begin{matrix}
n \\
j
\end{matrix}\right)
(x+2)^j(-1)^{n-j}{C}_{p+j}(x).$$
\end{example}

Finally, we end up this note by providing an identity in which $\mathcal{F}_n(x)$ and $\mathcal{L}_n(x)$ are involved.

\begin{proposition}\label{l-Honsberger}
Let $\mathcal{F}_n(x)$ and $\mathcal{L}_n(x)$ be the Fibonacci type polynomial and Lucas type polynomial respectively with $\mathcal{L}_0(x)=\mathcal{F}_1(x)=a$.
Then for each $n,m\in\Bbb{N}$,
$$a\mathcal{L}_{n+m}(x)=\mathcal{L}_{n+1}(x)\mathcal{F}_{m}(x)+d(x)\mathcal{L}_{n}(x)\mathcal{F}_{m-1}(x).$$
\end{proposition}
\begin{proof}
By Theorem \ref{l-type}, we have
\begin{eqnarray*}
&&\left(\begin{matrix}
\mathcal{L}_{n+m+2}(x) & d(x)\mathcal{L}_{n+m+1}(x) \\
\mathcal{L}_{n+m+1}(x) & d(x)\mathcal{L}_{n+m}(x)
\end{matrix}\right)\\
&=&
\left(\begin{matrix}
\mathcal{L}_2(x) & d(x)\mathcal{L}_1(x) \\
\mathcal{L}_1(x) & d(x)\mathcal{L}_0(x)
\end{matrix}\right)
Q^{n}(x)Q^{m}(x)\\
&=&
\left(\begin{matrix}
\mathcal{L}_{n+2}(x) & d(x)\mathcal{L}_{n+1}(x) \\
\mathcal{L}_{n+1}(x) & d(x)\mathcal{L}_{n}(x)
\end{matrix}\right)
\left(\begin{matrix}
\frac{1}{a}\mathcal{F}_{m+1}(x) & \frac{d(x)}{a}\mathcal{F}_{m}(x) \\
\frac{1}{a}\mathcal{F}_{m}(x) & \frac{d(x)}{a}\mathcal{F}_{m-1}(x)
\end{matrix}\right).
\end{eqnarray*}
Then by the $(2,2)$ entry of the first matrix in the above equality, we have
$$a\mathcal{L}_{n+m}(x)=\mathcal{L}_{n+1}(x)\mathcal{F}_{m}(x)+d(x)\mathcal{L}_{n}(x)\mathcal{F}_{m-1}(x)$$
for each $n,m\in\Bbb{N}$.
\end{proof}

\begin{example}
Let $\mathcal{F}_n(x)$ and $\mathcal{L}_n(x)$ be the Jacobsthal polynomial $J_n(x)$ and the Jacobsthal-Lucas polynomial $\Lambda_n(x)$ respectively, as defined in Table \ref{table}. Then $\Lambda_0(x)=J_1(x)=1$ which satisfies the condition in Proposition \ref{l-Honsberger}. Hence we have the following equality for $J_n(x)$ and $\Lambda_n(x)$:
$$\Lambda_{n+m}(x)=\Lambda_{n+1}(x)J_{m}(x)+2x\Lambda_{n}(x)J_{m-1}(x).$$
\end{example}

\begin{proposition}\label{l-d'Ocagne}
Let $\mathcal{F}_n(x)$ and $\mathcal{L}_n(x)$ be the Fibonacci type polynomial and Lucas type polynomial respectively with $\mathcal{L}_0(x)=\mathcal{F}_1(x)=a$.
Let $d(x)\neq0$ for each $x\in\Bbb{R}$.
Then for each $n,m\in\Bbb{N}$ with $n\geq m$,
$$a(-d(x))^{m}\mathcal{L}_{n-m}(x)=\mathcal{L}_{n}(x)\mathcal{F}_{m+1}(x)-\mathcal{L}_{n+1}(x)\mathcal{F}_{m}(x).$$
\end{proposition}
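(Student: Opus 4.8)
The plan is to mimic the proof of Corollary~\ref{f-d'Ocagne}, but with the Lucas-type matrix from Theorem~\ref{l-type} playing the role that $Q^n(x)$ plays there. The starting point is the factorization $Q^{n-m}(x)=Q^{n}(x)Q^{-m}(x)$, valid for $n\geq m$ once $Q(x)$ is known to be invertible; this is precisely where the hypothesis $d(x)\neq0$ is used, since $\det Q(x)=-d(x)$.

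First I would apply Theorem~\ref{l-type} with $n$ replaced by $n-m$ to rewrite the Lucas matrix at $n-m$ as the leading Lucas matrix times $Q^{n-m}(x)$, and then substitute $Q^{n-m}(x)=Q^{n}(x)Q^{-m}(x)$. Applying Theorem~\ref{l-type} once more, now at $n$, lets me absorb the leading Lucas matrix together with $Q^{n}(x)$ back into the Lucas matrix at $n$, so that the whole identity collapses to
$$
\left(\begin{matrix}
\mathcal{L}_{n-m+2}(x) & d(x)\mathcal{L}_{n-m+1}(x) \\
\mathcal{L}_{n-m+1}(x) & d(x)\mathcal{L}_{n-m}(x)
\end{matrix}\right)
=
\left(\begin{matrix}
\mathcal{L}_{n+2}(x) & d(x)\mathcal{L}_{n+1}(x) \\
\mathcal{L}_{n+1}(x) & d(x)\mathcal{L}_{n}(x)
\end{matrix}\right)
Q^{-m}(x),
$$
where $Q^{-m}(x)$ is the explicit matrix already computed in the remark preceding Corollary~\ref{f-d'Ocagne}, namely $\tfrac{1}{(-d(x))^{m}}$ times the matrix with entries $\tfrac{d(x)}{a}\mathcal{F}_{m-1}(x)$, $-\tfrac{d(x)}{a}\mathcal{F}_{m}(x)$, $-\tfrac{1}{a}\mathcal{F}_{m}(x)$, $\tfrac{1}{a}\mathcal{F}_{m+1}(x)$.

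Finally I would read off the $(2,2)$ entry of this matrix equation. Pairing row $2$ of the Lucas matrix at $n$, namely $\bigl(\mathcal{L}_{n+1}(x),\,d(x)\mathcal{L}_{n}(x)\bigr)$, against column $2$ of $Q^{-m}(x)$ gives
$$
d(x)\mathcal{L}_{n-m}(x)
=\frac{d(x)}{a(-d(x))^{m}}\bigl(\mathcal{L}_{n}(x)\mathcal{F}_{m+1}(x)-\mathcal{L}_{n+1}(x)\mathcal{F}_{m}(x)\bigr),
$$
after which cancelling the common factor $d(x)$ (legitimate since $d(x)\neq0$) and clearing the denominator $a(-d(x))^{m}$ yields the claimed identity.

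There is no genuine obstacle here; the argument is a direct transcription of Corollary~\ref{f-d'Ocagne} into the mixed Lucas/Fibonacci setting. The only points demanding a little care are the bookkeeping of the factors of $d(x)$ sitting in the second column of the Lucas matrix, so that one really obtains $d(x)\mathcal{L}_{n-m}(x)$ on the left and a matching $d(x)$ on the right that cancels cleanly, and making explicit that both the invertibility of $Q(x)$ and the final cancellation rest on the standing hypothesis $d(x)\neq0$.
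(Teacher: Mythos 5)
Your proposal is correct and follows essentially the same route as the paper: apply Theorem~\ref{l-type} at index $n-m$, factor $Q^{n-m}(x)=Q^{n}(x)Q^{-m}(x)$, absorb the leading Lucas matrix with $Q^{n}(x)$ via Theorem~\ref{l-type} at index $n$, insert the explicit $Q^{-m}(x)$ from the remark preceding Corollary~\ref{f-d'Ocagne}, and read off the $(2,2)$ entry. Your bookkeeping of the $d(x)$ factors and the final cancellation matches the paper's computation exactly.
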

\begin{proof}
By Theorem \ref{l-type} and $Q^{n-m}(x)=Q^{n}(x)Q^{-m}(x)$, we have
\begin{eqnarray*}
&&\left(\begin{matrix}
\mathcal{L}_{n-m+2}(x) & d(x)\mathcal{L}_{n-m+1}(x) \\
\mathcal{L}_{n-m+1}(x) & d(x)\mathcal{L}_{n-m}(x)
\end{matrix}\right)\\
&=&
\left(\begin{matrix}
\mathcal{L}_2(x) & d(x)\mathcal{L}_1(x) \\
\mathcal{L}_1(x) & d(x)\mathcal{L}_0(x)
\end{matrix}\right)
Q^{n}(x)Q^{-m}(x)\\
&=&
\left(\begin{matrix}
\mathcal{L}_{n+2}(x) & d(x)\mathcal{L}_{n+1}(x) \\
\mathcal{L}_{n+1}(x) & d(x)\mathcal{L}_{n}(x)
\end{matrix}\right)
\frac{1}{(-d(x))^m}
\left(\begin{matrix}
\frac{d(x)}{a}\mathcal{F}_{m-1}(x) & -\frac{d(x)}{a}\mathcal{F}_{m}(x) \\
-\frac{1}{a}\mathcal{F}_{m}(x) & \frac{1}{a}\mathcal{F}_{m+1}(x)
\end{matrix}\right).
\end{eqnarray*}
Then considering the $(2,2)$ entry of the first matrix in the above equality, we have
$$a(-d(x))^{m}\mathcal{L}_{n-m}(x)=\mathcal{L}_{n}(x)\mathcal{F}_{m+1}(x)-\mathcal{L}_{n+1}(x)\mathcal{F}_{m}(x).$$
\end{proof}

\begin{example}
Let $\mathcal{F}_n(x)$ and $\mathcal{L}_n(x)$ be the Jacobsthal polynomial $J_n(x)$ and the Jacobsthal-Lucas polynomial $\Lambda_n(x)$ respectively.
Then $\Lambda_0(x)=J_1(x)=1$ and
$$(-2x)^{m}\Lambda_{n-m}(x)=\Lambda_{n}(x)J_{m+1}(x)-\Lambda_{n+1}(x)J_{m}(x).$$
\end{example}

\vspace{.1in}
\end{document}